\providecommand{\R}{\mathbb{R}}
\providecommand{\E}{\mathcal{E}}
\providecommand{\eps}{\varepsilon}
\newcommand{\bS}{\mathbb{S}}
\renewcommand{\leq}{\leqslant}
\renewcommand{\geq}{\geqslant}
\renewcommand{\div}{\operatorname{div}}
\newcommand{\dist}{\operatorname{dist}}
\newcommand{\Id}{\operatorname{Id}}
\newtheorem{Theorem}{Theorem}
\newtheorem{Definition}{Definition}
\newtheorem{Proposition}{Proposition}
\newtheorem{Lemma}{Lemma}
\begin{document}

\date{\today}
\title{On the  inviscid limit for the compressible Navier-Stokes system in an impermeable bounded domain.  }

\author{Franck Sueur\footnote{CNRS, UMR 7598, Laboratoire Jacques-Louis Lions, F-75005, Paris, France}
\footnote{UPMC Univ Paris 06, UMR 7598, Laboratoire Jacques-Louis Lions, F-75005, Paris, France}
\footnote{MSC 2010: 35Q30, 76N20.}
}

\maketitle

\begin{abstract}
In this paper we investigate the issue of the inviscid limit for the compressible Navier-Stokes system in an impermeable fixed bounded domain.
 We consider two kinds of boundary conditions.

 The first one is the no-slip condition. 
 In this case we extend the famous conditional result \cite{Tosio} obtained by Kato  in the homogeneous  incompressible case.
  Kato proved  that if the energy dissipation rate of the viscous flow in a boundary layer of width proportional to the viscosity vanishes then the solutions of the incompressible Navier-Stokes equations converge to some solutions of the incompressible Euler equations in the energy space. 
  We provide here a natural extension of this result to the compressible case.

The other case  is the Navier condition which encodes that the fluid slips with some friction on the boundary.
 In this case we show that the convergence to the  Euler equations
holds  true in the energy space, as least when the friction is not too large.

 In both cases we use in a crucial way some relative energy estimates proved recently by 
Feireisl,  Ja Jin and  Novotn{\'y} in \cite{feireisl-jmfm}.
\end{abstract}

\section{Introduction}
In this paper we investigate the issue of the inviscid limit for the compressible Navier-Stokes system in a fixed bounded domain.
Formally dropping the viscous terms the system degenerates into the compressible Euler system.
Yet the rigorous justification is  intricate because of the appearance of boundary layers, particularly in the case of an impermeable boundary, the so-called characteristic case.

\par
\ \par
\noindent

A longstanding approach follows the seminal work of Prandtl which predicts, in the case of the homogeneous  incompressible Navier-Stokes system in an  impermeable bounded domain with no-slip condition, a sharp variation of the fluid velocity in a boundary strip of width proportional to the square of the viscosity factor.
Yet this approach seems to fail to justify the inviscid limit  in general, see \cite{DGVD,guo} and the references therein.

On the other hand Kato proved in  \cite{Tosio} the following conditional result: if the energy dissipation rate of the viscous flow in a boundary layer of width proportional to the viscosity vanishes then the solutions of the incompressible Navier-Stokes equations converge to some solutions of the incompressible Euler equations in the energy space. 
This width is much smaller than the one given by Prandtl's theory, what seems to indicate that one has to go beyond Prandtl's description to understand the inviscid limit. 

\par
\ \par
\noindent

As mentioned in the survey \cite{E} by E,  not much is known about the compressible case. Nevertheless let us mention the paper \cite{xy} which tackles the linearized Navier-Stokes equations for a compressible viscous fluid in the half-plane, the paper \cite{rousset} which deals with the one-dimensional case and the paper  \cite{gmwz} which treats the noncharacteristic case.

The first main result of this paper is an extension of Kato's result to the compressible case.
The second result deals with the case where a Navier condition is prescribed on the boundary. 
This condition encodes that  the fluid slips with some friction on the boundary. 
In this case we prove that if the friction is not too large with respect to the viscosity
then the solutions of the compressible Navier-Stokes equations converge to some solutions of the compressible Euler equations in the energy space in the inviscid limit. 
This extends some earlier results obtained by  \cite{BGP,Iftimie-Planas,Paddick,xin} in the incompressible case.

\par
\ \par
\noindent

Let us now say a few words about the technics employed in this paper.
In both cases we start with the observation that the results obtained  in the incompressible case  hinted above (i.e. \cite{Tosio,Iftimie-Planas,Paddick,BGP,xin}) use a strategy somehow related to the issue of weak-strong uniqueness, where one compares two solutions of the same equation, only one of which being smooth. 
In the results  \cite{Tosio,Iftimie-Planas,Paddick,BGP,xin} there is also a comparison between a weak and a strong solution but the word ``solution'' does not 
 not refer to the same equation.
Indeed they consider a smooth solution of the Euler equations and a weak solution of the Navier-Stokes equations. 
Here ``weak solution" refers to the solution constructed by Leray in \cite{LerayActa,LerayJMPA}, whereas local in time existence and uniqueness  of classical solutions of the Euler equations  is known since the work \cite{Wolibner} of   Wolibner. 
One may wish  to apply the weak formulation of the Navier-Stokes equations with the solution of the Euler equations  as a test function. 
However, in the case of the no-slip condition, the Euler solution does not satisfy all the boundary condition required to be an admissible test function.  
In order to overcome this difficulty Kato  introduced a corrector which, added to the Euler solution, provided a smooth test function which satisfies the no-slip condition and yet quite close to the  Euler solution. This corrector is referred to as a  ``fake'' layer, as there is no reason that it describes what really happens in the boundary's neighbourhood.
Then a few standard manipulations provide an estimate of the difference between the Navier-Stokes  and the Euler solutions in the energy space.

In the sequel we adapt this strategy to the compressible case. 
In this setting the existence of global weak solutions of the Navier-Stokes equations is known since the pioneering work  \cite{lions} of Lions, later improved by Feireisl, Novotn{\'y} and Petzeltov\'a in \cite{feireisl}, see also \cite{feireisl-book1,feireisl-book2}, for various boundary conditions.
On the other hand  the local  in time existence of strong solutions of the compressible Euler equations is well-known since the works \cite{agemi,bdv,ebin1,ebin2,Schochet}.
Here, we will also get  inspired by some recent breakthroughs  in the issue of weak-strong uniqueness of the  compressible  Navier-Stokes equations, in particular by using  some  relative energy estimates  recently proved by  Feireisl,  Ja Jin and  Novotn{\'y} in \cite{feireisl-jmfm}.
This result is somehow reminiscent of the pioneering papers \cite{Dafermos,DiP,Yau}. Let us also refer to the recent works \cite{Gallay,LSR,Vasseur} and the references therein.
Unlike the energy method used in the incompressible case this will provide a non-symmetric ``measure'' of the difference between the Euler and the Navier-Stokes solutions.
Yet, as in the weak formulations, the relative energy estimates  involve some test functions which satisfy some boundary conditions; and 
as in the incompressible case the no-slip condition is more intricate than the Navier condition as the test functions also have to satisfy the no-slip conditions, which are not satisfied by the Euler solution.
We will therefore adapt, in this case,   Kato's corrector construction in order to fit with the compressible setting.

\subsection{The compressible Navier-Stokes system}
In this paper we consider the compressible Navier-Stokes system:
\begin{eqnarray}
\label{NS1}
\partial_t \rho + \div (\rho u ) = 0 ,
\\ \label{NS2}
\partial_t  (\rho u ) +  \div (\rho u \otimes u) + \nabla_x  p(\rho) = \eps \div  \bS (\nabla_x  u)  ,
\end{eqnarray}
where \
\begin{equation}
\label{NS3}
 \bS (\nabla_x  u)  := \mu \Big( \nabla_x  u + (\nabla_x  u)^T ) - \frac23 ( \div u ) \Id \Big) + \eta (\div u) \Id ,
\end{equation}
in a bounded regular domain 
\begin{equation*}
\Omega \subset \R^3 .
\end{equation*}
Above, the unknowns are the fluid density $\rho (t,x)$, defined on $[ 0,+\infty) \times \Omega$ with values in $[ 0,+\infty) $ 
 and the fluid velocity $ u (t,x)$, defined on $[ 0,+\infty) \times \Omega$ with values in $\R^{3}$, whereas $\mu > 0 $ and $ \eta \geq 0 $ are two viscosity coefficients, and $\eps >0$ is a scaling factor.
 In \eqref{NS3} the notation $\nabla_x  u$ stands for the Jacobian matrix of the vector field $u$ and $(\nabla_x  u)^T$ denotes its transpose.
Let us stress that the lower bounds on  $\mu  $ and $ \eta $ entail that  the tensor product 
\begin{equation}
\label{tensor}
\bS (\nabla_x  u)  : \nabla_x  u = \sum_{1 \leq i,j \leq  3} \frac{\mu}{2} ( \partial_i u_{j} +  \partial_j u_{i} )^{2} + (\eta - \frac23 \mu) |   \div u |^{2} 
\end{equation}
is a positively definite quadratic form with respect to $( \partial_i u_{j} )_{1 \leq i,j \leq  3}$.

We assume that there exists $\gamma >  \frac32$ such that 
the pressure $p(\rho)$ is the following function of the density 
\begin{equation}
\label{sake}
p(\rho) = \rho^{\gamma}  .
\end{equation}
Observe that this class of pressure laws includes in particular the case of  a monoatomic gas for which the adiabiatic constant $\gamma $ is $  \frac53$.

We prescribe the initial conditions:
\begin{equation*}
 \rho  \vert_{t=0} = \rho_{0} ,\quad (\rho u )  \vert_{t=0}  = \rho_{0}   u_{0}.
\end{equation*}
In this setting the existence of global weak solutions is now well understood thanks to the pioneering work  \cite{lions} of  Lions, later improved by Feireisl, Novotn{\'y} and Petzeltov\'a in \cite{feireisl}, see also \cite{feireisl-book1,feireisl-book2}, for various boundary conditions. \par \
\par

In this paper we are interested in the limit  $ \eps  \rightarrow 0 $ which is quite sensitive to the boundary conditions prescribed on the boundary $\partial \Omega$. 
In the sequel we consider two kinds of  boundary conditions.

\subsection{No-slip conditions}
In this section we  prescribe on the boundary $\partial \Omega$ of the fluid domain the following no-slip condition:
\begin{equation}
\label{noslip}
u  \vert_{\partial \Omega}= 0 .
\end{equation}
Let us recall that we mean by weak solution in this case.
\begin{Definition}
\label{Wsol}
Let be given some  initial data $(\rho_{0} , u_{0} ) $ such that $\rho_{0}  \geq 0$, 
 $\rho_{0} \in L^\gamma (\Omega) $, $\rho_{0} u_0^2 \in L^1 (\Omega) $.
 Let $q_{0} := \rho_{0} u_{0} $ which is in $ L^\frac{2\gamma}{\gamma +1}  (\Omega)$ and let $T>0$.
We say that $(\rho ,u )$ is a finite energy weak solution of the compressible Navier-Stokes system on $[0,T]$ associated to the initial data $(\rho_{0} , u_{0} ) $
if 
\begin{itemize}
\item $\rho  \in C_w ([0,T]; L^\gamma  (\Omega) )$, $\rho u  \in C_w ([0,T];  L^\frac{2\gamma}{\gamma +1}  (\Omega) )$, $u   \in L^2 (0,T ; H^1_0  (\Omega) )$,  $\rho u^2 \in C_w ([0,T];  L^1 (\Omega) )$,
\item  the identity
\begin{equation*}
\int_\Omega \rho (T,\cdot) \phi (T,\cdot) dx  - \int_\Omega \rho_{0}  \phi (0,\cdot)  dx  = \int_0^T \int_\Omega (  \rho \partial_t \phi  +  \rho  u \cdot \nabla_x  \phi ) dx dt
\end{equation*}
holds for any $\phi  \in C^\infty_c ( [0,T] \times \overline{\Omega} ; \R )$,
\item  the identity
\begin{equation*}
\int_\Omega \rho (T,\cdot) u (T,\cdot)  \cdot \phi (T,\cdot)  dx  - \int_\Omega  q_{0} \cdot  \phi (0,\cdot)  dx  = \int_0^T \int_\Omega \Big(  \rho u \cdot \partial_t \phi  +  \rho  u \otimes u :  \nabla_x  \phi + p(\rho) \div \phi -  \eps \bS (\nabla_x  u) : \nabla_x  \phi   \Big) dx dt
\end{equation*}
holds for any $\phi \in C^\infty_c ( [0,T] \times {\Omega} ; \R^3)$,

\item the energy inequality:
\begin{equation}
\label{energynoslip}
\mathcal{E} ( \rho (\tau,\cdot)  ,u  (\tau,\cdot) ) 
+   \eps  \int_0^\tau \int_\Omega\bS (\nabla_x  u) : \nabla_x  u \, dx dt
 \leq   \mathcal{E} ( \rho_0  ,u_0  )
\end{equation}
holds true for almost every $\tau \in [0,T]$, where
\begin{equation}
\label{nrj}
 \mathcal{E} ( \rho ,u ) := \int_\Omega   E( \rho ,u )  dx ,  \text{ with }  E( \rho ,u ) := \frac12 \rho | u |^2 + H( \rho)  \text{ and }    H( \rho ) :=  \frac{\rho^\gamma }{\gamma - 1} .
 \end{equation}
\end{itemize}
\end{Definition}
Let us stress that the two terms in the left hand side of \eqref{energynoslip} are nonnegative. In particular it follows from \eqref{tensor} that there exists a constant $C_0 > 0$ such that  for any $u \in H^1 (\Omega)$, 
\begin{equation}
\label{coercifnoslip}
 \int_\Omega\bS (\nabla_x  u) : \nabla_x  u \, dx \geq C_0  \int_\Omega | \nabla_x u |^2 \,  dx .
\end{equation}
Let us now state the result of global existence of weak solutions hinted above.
\begin{Theorem}[\cite{lions}, \cite{feireisl}]
\label{LF}
Let be given some  initial data $(\rho_{0} , u_{0} ) $ such that $\rho_{0}  \geq 0$, 
 $\rho_{0} \in L^\gamma (\Omega) $, $\rho_{0} u_0^2 \in L^1 (\Omega) $.
 Let $T>0$.
Then there exists a finite energy weak solution of the compressible Navier-Stokes system on $[0,T]$ associated to the initial data $(\rho_{0} , u_{0} ) $.
\end{Theorem}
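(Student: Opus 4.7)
The plan is to follow the Lions--Feireisl--Novotný--Petzeltová scheme, which proceeds by a multi-parameter approximation of the system and then passes to the limit in each parameter in turn, the delicate point being the strong compactness of the density sequence when $\gamma$ is close to $3/2$.

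First I would introduce a regularized system with three small parameters: a Galerkin dimension $n$ for the momentum equation, an artificial viscosity $\varepsilon' \Delta \rho$ added to the continuity equation (together with homogeneous Neumann condition for $\rho$), and an artificial pressure $\delta \rho^\beta$ with $\beta$ large (say $\beta \geq \max(4,\gamma)$) added to $p(\rho)$. With the artificial viscosity the continuity equation becomes a parabolic problem that, for any $u$ in the finite-dimensional Galerkin space, admits a unique smooth density $\rho[u] \geq \underline{\rho} > 0$ depending continuously on $u$. Plugging $\rho[u]$ into the Galerkin momentum equation produces an ODE system on a compact time interval, which is solved by a Schauder fixed-point argument combined with the regularized energy inequality. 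This step is technical but standard and yields approximate solutions $(\rho_{n,\varepsilon',\delta}, u_{n,\varepsilon',\delta})$ with all the required uniform bounds coming from \eqref{energynoslip} plus the extra $\delta\rho^\beta/(\beta-1)$ contribution.

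Next I would pass to the limit successively in $n\to\infty$, then $\varepsilon'\to 0$, then $\delta\to 0$. At each stage the energy inequality together with \eqref{coercifnoslip} provides uniform bounds on $\sqrt{\rho} u$ in $L^\infty_t L^2_x$, on $\rho$ in $L^\infty_t L^\gamma_x$ (plus $L^\infty_t L^\beta_x$ at the first two stages), and on $u$ in $L^2_t H^1_{0,x}$. From these bounds and the equations one extracts weak limits, upgrades the density convergence to $C_w([0,T];L^\gamma)$ by Aubin--Lions-type arguments applied to $\rho$ (its time derivative is controlled in a negative Sobolev space via the continuity equation), and obtains $\rho u$ convergence in $C_w([0,T];L^{2\gamma/(\gamma+1)})$ similarly from the momentum equation. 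To improve integrability of the pressure past $L^\infty_t L^\gamma_x$ one uses the Bogovskii operator with test function $\psi = \mathcal{B}[\rho^\theta - \overline{\rho^\theta}]$ in the momentum equation, which gives an estimate on $\rho^{\gamma+\theta}$ in $L^1([0,T]\times\Omega)$ for some $\theta>0$, critical to controlling concentrations of $p(\rho)$.

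The hard part, and the true heart of the proof, is passing to the strong limit in the nonlinear pressure $p(\rho_k) \rightharpoonup \overline{p(\rho)}$. The two key ingredients are Lions's effective viscous flux identity, which asserts that, despite the lack of compactness, the quantity $p(\rho_k) - (\tfrac{4}{3}\mu + \eta)\div u_k$ has a weak continuity property with respect to the density; and the renormalized continuity equation in the sense of DiPerna--Lions, which requires $\rho\in L^2_{t,x}$. The effective flux identity is proved by testing the momentum equation against $\nabla \Delta^{-1}(\rho_k \varphi)$ and using the $\mathrm{div}$--$\mathrm{curl}$ structure of Riesz operators applied to the commutator. To close the argument for $\gamma > 3/2$ (where $\rho\in L^2$ is not automatic) one must, as Feireisl observed, introduce the oscillation defect measure $\mathrm{osc}_q[\rho_k\to\rho]$, show it is finite for some $q>2$, and use it to justify the renormalized equation for the limit density. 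Combining this with the convexity of $H$ and the renormalized identity for $(\rho\log\rho - \overline{\rho\log\rho})$ yields $\rho_k\to\rho$ a.e., hence $p(\rho_k)\to p(\rho)$ in $L^1$. Once this is achieved at every level, the final solution satisfies all requirements of Definition \ref{Wsol}, the energy inequality being preserved by weak lower semicontinuity along each limit passage, and the initial data being attained by the $C_w$ regularity in time. Regularization of the initial data (to ensure the approximate system is well-posed) is handled by a straightforward density argument combined with stability of the weak formulation.
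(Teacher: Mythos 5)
The paper does not prove this theorem; it quotes it from \cite{lions} and \cite{feireisl}, and your proposal is a faithful and correct roadmap of exactly the proof given in those references (three-level approximation, Bogovskii pressure estimates, effective viscous flux, oscillation defect measure, renormalized continuity equation). Nothing in your outline is off track, so there is no divergence to report beyond noting that each named step is itself a substantial piece of analysis carried out in full in the cited works.
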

Let us now investigate the issue of the inviscid limit.
Formally when the  coefficient $\eps$  is set equal to zero in \eqref{NS1}-\eqref{NS3} one obtains the following compressible Euler system:
\begin{eqnarray}
\label{E1}
\partial_t \rho^{E} + \div (\rho^{E} u^{E} ) = 0 ,
\\ \label{E2}
\partial_t  (\rho^{E} u^{E} ) +  \div (\rho^{E} u^{E} \otimes u^{E}) + \nabla_x  p(\rho^{E}) = 0  ,
\end{eqnarray}
for which one prescribes the following initial condition:
\begin{equation}
 \label{E3}
 \rho^{E}  \vert_{t=0} = \rho^{E}_{0} , \quad (\rho^{E} u^{E} )  \vert_{t=0}  = \rho^{E}_{0} u^{E}_{0} .
\end{equation}
and the following boundary condition:
\begin{equation}
\label{E4}
u^{E} \cdot n  \vert_{\partial \Omega}= 0 ,
\end{equation}
where $n$ is the unit outward normal to the domain $\Omega$.
Let us stress in particular the loss of information about the tangential components from \eqref{noslip} to \eqref{E4}.
For this system the local existence of strong solutions is well-known, cf. \cite{agemi,bdv,ebin1,ebin2,Schochet}.
\begin{Theorem}[\cite{agemi,bdv,ebin1,ebin2,Schochet}] 
\label{EulerStrong}
Let be given $(\rho^{E}_{0} , u^{E}_{0} )$ be some smooth and compatible initial data with 
$0 < \inf_{ \Omega} \rho^{E}_{0}  $ and  $ \sup_{ \Omega} \rho^{E}_{0}   (x) < \infty $.
Then there exists $T>0$ and  a unique smooth solution  $(\rho^{E} , u^{E} ) $ of \eqref{E1}-\eqref{E4} such  that 
\begin{equation}
\label{nicedensity}
0 < \inf_{(0,T) \times \Omega} \rho^{E}  \text{ and } \sup_{(0,T) \times \Omega} \rho^{E}  < \infty .
\end{equation}
\end{Theorem}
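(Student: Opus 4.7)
The plan is to recast the Euler system \eqref{E1}--\eqref{E4} as a quasilinear symmetric hyperbolic system and derive a priori $H^s$ bounds compatible with the characteristic boundary condition \eqref{E4}, then construct the solution by a Picard-type iteration. Since $p(\rho) = \rho^\gamma$, the sound speed $c(\rho) := \sqrt{p'(\rho)} = \sqrt{\gamma}\, \rho^{(\gamma-1)/2}$ is smooth and positive on $\{\rho > 0\}$. Introducing the Riemann-type variable $\sigma := \frac{2}{\gamma - 1} c(\rho)$, a direct computation shows that, as long as $\rho$ stays bounded away from zero, the system \eqref{E1}--\eqref{E2} is equivalent to
\begin{align*}
\partial_t \sigma + u \cdot \nabla_x \sigma + \frac{\gamma - 1}{2} \sigma \, \div u &= 0, \\
\partial_t u + (u \cdot \nabla_x) u + \frac{\gamma - 1}{2} \sigma \, \nabla_x \sigma &= 0,
\end{align*}
which is symmetric hyperbolic in $(\sigma, u)$ with constant symmetrizer. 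The hypothesis $0 < \inf_\Omega \rho^E_0 \le \sup_\Omega \rho^E_0 < \infty$ ensures that $\sigma_0$ is bounded above and below by positive constants, which is the regime where the above formulation is well posed.

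The main difficulty is that the slip condition $u \cdot n = 0$ makes $\partial \Omega$ characteristic, so the classical Friedrichs--Rauch--Massey theory for non-characteristic hyperbolic IBVPs does not apply directly; a naive $H^s$ energy estimate loses a normal derivative at the boundary. Following the strategy of Beir\~ao da Veiga, Ebin, Schochet, and Agemi cited in the statement, the remedy is to fix a finite family of smooth vector fields $Z_1, \ldots, Z_N$ on $\overline{\Omega}$ that are tangent to $\partial \Omega$ at the boundary and, together with a transverse field $\partial_n$, span the tangent space of $\overline{\Omega}$ everywhere, and then to estimate the conormal Sobolev norm $\sum_{|\alpha| \le s} \| Z^\alpha (\sigma, u) \|_{L^2(\Omega)}$ by standard $L^2$ energy arguments on the differentiated system. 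The symmetry of the principal part combined with $u \cdot n = 0$ ensures that the boundary integrals produced by integration by parts vanish. Normal derivatives are then recovered algebraically: the equations themselves express $\partial_n \sigma$ and the normal derivative of the tangential component of $u$ in terms of $\partial_t$ and tangential derivatives of $(\sigma, u)$, and iterating this substitution upgrades the conormal bound to the full $H^s$ bound. This procedure requires compatibility conditions on $(\rho^E_0, u^E_0)$ so that the traces at $t = 0$ of sufficiently many time derivatives of $(\sigma, u)$ are well defined.

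With these a priori estimates in hand, existence and uniqueness follow classically. Fix $s > 5/2$ so that $H^{s-1}(\Omega) \hookrightarrow L^\infty(\Omega)$, and construct a sequence $(\sigma^k, u^k)$ by linearization: given $(\sigma^k, u^k)$, define $(\sigma^{k+1}, u^{k+1})$ as the solution of the linear symmetric hyperbolic system obtained by freezing the coefficients in $(\sigma^k, u^k)$, with the linear boundary condition $u^{k+1} \cdot n = 0$ on $\partial \Omega$. The conormal/normal estimate above, transported to this linear setting, yields a uniform bound in $L^\infty(0,T; H^s(\Omega))$ on a time interval $[0,T]$ depending only on the data. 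A contraction estimate in a norm one order lower provides convergence to a limit $(\sigma, u)$ solving the nonlinear system, from which $\rho^E$ is recovered via $\sigma = \frac{2\sqrt{\gamma}}{\gamma - 1} (\rho^E)^{(\gamma - 1)/2}$. The pointwise bounds \eqref{nicedensity} are propagated along the flow of $u^E$ thanks to \eqref{E1}, and uniqueness is obtained from an $L^2$ estimate on the difference of two solutions, using the $L^\infty$ control of their first derivatives given by the Sobolev embedding.
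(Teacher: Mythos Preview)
The paper does not prove Theorem~\ref{EulerStrong}; it is simply quoted from the cited references \cite{agemi,bdv,ebin1,ebin2,Schochet}, and the only commentary the paper adds is the remark that ``compatible'' refers to the standard compatibility conditions at $\partial\Omega$ needed for strong solutions. So there is no proof in the paper to compare against.

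That said, your sketch is a faithful outline of the strategy in those references, in particular Schochet \cite{Schochet} and Beir\~ao da Veiga \cite{bdv}: symmetrize via a density-like variable so that the system becomes symmetric hyperbolic, recognize that $u\cdot n=0$ makes the boundary characteristic, control conormal (tangential) derivatives by energy estimates whose boundary terms vanish thanks to $u\cdot n=0$, and then recover normal derivatives algebraically from the equations. The iteration/contraction step and the propagation of the density bounds \eqref{nicedensity} via the transport equation \eqref{E1} are likewise standard. Nothing in your outline is wrong; it is just that the paper itself defers all of this to the literature rather than supplying a proof.
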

Above ``compatible'' refers to some conditions satisfied by the initial data on the boundary $\partial \Omega$ which are necessary for the existence of a strong solution. 
We refer here to  \cite{rm,Schochet} for more information on this subject.
 \par \
\par

We can now state the first main result of the paper. 
Let us denote
\begin{equation}
\label{defdist}
 d_\Omega (x) := \dist (x,  \partial   \Omega)  \text{ and }
\Gamma_{\eps} := \{  x \in  \Omega / \   d_\Omega (x) <  \eps \} ,
\end{equation}
which is well defined for $\eps >0$ small enough.

\begin{Theorem}
\label{ThKato}
Let be given $c>0$.

 Let be given $T>0$ and  $(\rho^{E} , u^{E} ) $  the strong solution of the Euler equations corresponding to an initial data $(\rho^{E}_{0} , u^{E}_{0} )$  as in Theorem \ref{EulerStrong}.

For any $\eps \in (0,1)$, let $(\rho_{0} , u_{0} ) := (\rho^{\eps}_{0} , u^{\eps}_{0} ) $  be an 
 initial data such that $\rho_{0}  \geq 0$, 
 $\rho_{0} \in L^\gamma (\Omega) $, $\rho_{0} u_0^2 \in L^1 (\Omega) $, and consider $(\rho , u) := (\rho^{\eps} , u^{\eps})$ an  associated  weak solution of the compressible Navier-Stokes system on $[0,T]$ as given by Theorem \ref{LF}.

Assume that 
\begin{equation}
\label{cvInit}
\| \rho_{0}  - \rho^{E}_{0}    \|_{L^{\gamma} (\Omega)} + \int_\Omega  \rho_{0} | u_{0} - u^{E}_{0}  |^2 \, dx  \rightarrow  0  \text{ when }\eps  \rightarrow  0 .
\end{equation}

Assume moreover that 
\begin{equation}
 \label{KatoCondition}
    \eps \int_{(0,T) \times \Gamma_{c\eps} } \Big(    \frac{\rho  | u |^{2}}{d_\Omega ^{2}}  +  \frac{ \rho^{2} ( u \cdot n )^{2}  }{d_\Omega ^{2}}     +  | \nabla_x  u |^{2} \Big) dx dt \rightarrow  0 , \text{ when }\eps  \rightarrow  0 .
\end{equation}

Then 
\begin{equation}
\label{cv}
\sup_{t \in (0,T)} \Big( \| \rho - \rho^{E}   \|_{L^{\gamma} (\Omega)} + \int_\Omega  \rho | u- u^{E} |^2 \, dx \Big) (t) \rightarrow  0  \text{ when }\eps  \rightarrow  0 .
\end{equation}
\end{Theorem}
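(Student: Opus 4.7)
The plan is to use the relative energy inequality of Feireisl--Jin--Novotn\'y with test function $(r,U) = (\rho^E, u^E - v^\eps)$, where $v^\eps$ is a Kato-type corrector that kills the tangential trace of $u^E$ on $\partial\Omega$. Since $u^E \cdot n=0$ on $\partial\Omega$ but the tangential component need not vanish, the vector field $U = u^E - v^\eps$ will be in $H^1_0(\Omega)$ and therefore admissible. If this relative energy is denoted
\begin{equation*}
\mathcal{E}(\rho,u \mid r, U) := \int_\Omega \Bigl(\tfrac12 \rho |u-U|^2 + H(\rho) - H'(r)(\rho-r) - H(r)\Bigr)\,dx,
\end{equation*}
then the inequality reads, schematically,
\begin{equation*}
\mathcal{E}(\rho,u \mid r, U)(\tau) + \eps\int_0^\tau\!\!\int_\Omega \bS(\nabla(u-U)):\nabla(u-U)\,dxdt \le \mathcal{E}(\rho_0,u_0 \mid r_0,U_0) + \int_0^\tau\mathcal{R}(t)\,dt,
\end{equation*}
where $\mathcal{R}$ collects residuals of the equations satisfied by $(r,U)$ tested against $(\rho,u)$. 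The quantity on the left-hand side controls $\|\rho-\rho^E\|_{L^\gamma}^\gamma + \int \rho|u-u^E|^2$ up to lower-order terms depending on $v^\eps$, using the bounds $0<\inf\rho^E\le\sup\rho^E<\infty$.

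For the corrector I would use the classical Kato construction, adapted from the incompressible setting: choose a smooth cutoff $\phi$ equal to $1$ at $0$ and supported in $[0,c]$, and set $v^\eps(t,x) := \phi\bigl(d_\Omega(x)/\eps\bigr)\,\tilde{u}^E(t,x)$, where $\tilde u^E$ is a smooth extension of the boundary trace of $u^E$ (which is tangential). Then $v^\eps$ is supported in $\Gamma_{c\eps}$, satisfies $v^\eps|_{\partial\Omega}=u^E|_{\partial\Omega}$, enjoys $\|v^\eps\|_{L^\infty}=O(1)$, $\|v^\eps\|_{L^2}=O(\sqrt\eps)$, $\|\partial_t v^\eps\|_{L^2}=O(\sqrt\eps)$, $\|\nabla v^\eps\|_{L^2}=O(1/\sqrt\eps)$, and $|v^\eps(x)|\le C\, d_\Omega(x)/\eps \cdot \mathbf{1}_{\Gamma_{c\eps}}$ by using that $\tilde u^E|_{\partial\Omega}$ is tangential combined with a Taylor expansion of the normal component in $d_\Omega$.

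Next I expand $\mathcal{R}$. Using that $(\rho^E,u^E)$ solves the Euler system, the terms in $\mathcal{R}$ that depend only on $(\rho,u,\rho^E,u^E)$ reorganize into a quantity bounded by $C\,\mathcal{E}(\rho,u\mid\rho^E,u^E)$, ready for Gronwall. The remaining terms are those produced by the corrector $v^\eps$, of the form
\begin{equation*}
\int_\Omega\!\Bigl(\rho(\partial_t v^\eps + u^E\!\cdot\!\nabla v^\eps)\!\cdot\!(U-u) + \rho(u-u^E)\otimes(u-u^E)\!:\!\nabla v^\eps + p(\rho)\div v^\eps + \eps\bS(\nabla u)\!:\!\nabla v^\eps\Bigr)dx.
\end{equation*}
The first three are handled by splitting $u = (u-u^E) + u^E$, using $|v^\eps| + d_\Omega|\nabla v^\eps|\lesssim 1$, and invoking the Kato assumption $\eps\int\rho|u|^2/d_\Omega^2\to 0$ combined with a Hardy-type estimate together with the bound on $\rho^2(u\cdot n)^2/d_\Omega^2$ (to handle the normal component where $v^\eps$ is not tangential). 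The viscous cross-term is controlled by Cauchy--Schwarz:
\begin{equation*}
\eps\int\bS(\nabla u):\nabla v^\eps\,dxdt \le C\bigl(\eps\|\nabla v^\eps\|_{L^2}^2\bigr)^{1/2}\bigl(\eps\|\nabla u\|_{L^2(\Gamma_{c\eps})}^2\bigr)^{1/2},
\end{equation*}
whose first factor is $O(1)$ and second factor is $o(1)$ by the Kato condition.

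The main obstacle is the convective term $\int \rho (u\otimes u):\nabla v^\eps$, since $\|\nabla v^\eps\|_{L^\infty}\sim 1/\eps$ makes the naive bound diverge; this is where Kato's trick plays its role, replacing $1/\eps$ by $1/d_\Omega$ on the support of $\nabla v^\eps$ and absorbing the singularity through the hypothesis \eqref{KatoCondition}. Once all corrector terms are shown to be $o(1)$ uniformly in time, Gronwall's lemma applied to the Euler-compatible part yields
\begin{equation*}
\sup_{\tau\in(0,T)}\mathcal{E}(\rho,u\mid\rho^E,u^E)(\tau)\longrightarrow 0,
\end{equation*}
and standard bounds on $H(\rho\mid\rho^E)$ (equivalent to $|\rho-\rho^E|^2\wedge|\rho-\rho^E|^\gamma$ on the range of $\rho^E$) translate this into the desired convergence \eqref{cv}.
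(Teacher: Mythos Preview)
Your proposal is correct and follows essentially the same route as the paper: apply the Feireisl--Jin--Novotn\'y relative energy inequality with test function $(\rho^E, u^E - v^\eps)$ for a Kato corrector supported in $\Gamma_{c\eps}$, use the Euler equations to reduce the remainder to a Gronwall-ready part plus corrector contributions, and kill the latter via hypothesis \eqref{KatoCondition} together with the scaling $d_\Omega^2\,\nabla v^\eps = O(\eps)$. Two small remarks: the paper takes the simpler choice $v_F = \xi(d_\Omega/c\eps)\,u^E$ rather than an extension of the boundary trace, and your claimed pointwise bound $|v^\eps(x)|\le C\,d_\Omega(x)/\eps$ cannot hold since it contradicts $v^\eps|_{\partial\Omega}=u^E|_{\partial\Omega}\ne 0$ --- you presumably meant this for the normal component $v^\eps\cdot n$, which is indeed $O(d_\Omega)$ by tangentiality of $u^E|_{\partial\Omega}$ and is exactly what yields the key fact $\div v^\eps = O(1)$ (rather than $O(1/\eps)$) needed to control the pressure term.
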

Let us stress that \eqref{cv} implies in particular that 
\begin{equation*}
\sup_{(0,T)}   \| \rho u - \rho^{E}  u^E  \|_{L^{1} (\Omega)} \rightarrow  0    \text{ when }\eps  \rightarrow  0 ,
\end{equation*}
since
\begin{equation*}
\rho u - \rho^{E}  u^E = ( \rho  - \rho^{E} )  u^E + \sqrt{\rho}  \sqrt{\rho} ( u- u^{E} ) .
\end{equation*}
Theorem \ref{ThKato} extends to the compressible case the earlier result  \cite{Tosio} obtained by  Kato  in the homogeneous  incompressible case.
Observe in particular the condition  \eqref{KatoCondition} can be simplified when the density $\rho$ is constant thanks to Hardy's inequality into the condition
\begin{equation*}
    \eps \int_{(0,T) \times \Gamma_{c\eps} }  | \nabla_x  u |^{2}  \, dx dt \rightarrow  0    \text{ when }\eps  \rightarrow  0 ,
\end{equation*}
which is the condition used by Kato in  \cite{Tosio}  in the  incompressible case.

Let us mention that Theorem \ref{ThKato} can be easily extended to the slightly more general pressure laws as described 
   in   \cite[Eq. (2.1)]{feireisl-jmfm}.  We choose here to deal with the law \eqref{sake} for the sake of clarity.
  Actually we plan to address in a forthcoming work the case of the full Navier-Stokes-Fourier system, for which Feireisl and Novotn{\'y} have recently established in   \cite{feireisl-arma}  some relative energy estimates and weak-strong uniqueness.

Furthermore there exists many variants of Kato's argument: see for instance \cite{Wang,Masmoudi,ILL}. 
In particular let us stress that the two last ones consider some settings where a Kato type analysis provides an unconditional theorem. 
Indeed  \cite{Masmoudi} considers different horizontal and vertical viscosities, going to zero with different speeds whereas \cite{ILL}  considers an obstacle whose size goes to zero with the viscosity.
We therefore hope that the analysis performed in this paper could be useful to 
extend some of these works  in the compressible case.
%

%
\subsection{Navier conditions}
In this section we  prescribe on the boundary $\partial \Omega$ of the fluid domain the following Navier condition:
\begin{equation}
\label{navier}
u \cdot n   = 0 \text{ and } \eps (\bS (\nabla_x  u) n)_{tan} = \beta u_{tan} \text{ on } \partial \Omega ,
\end{equation}
where $\beta \geq 0$ is the friction coefficient and $u_{tan} $ denotes the tangential component of a vector field $u: \overline{\Omega} \rightarrow \R^{3}$ on the boundary $ \partial \Omega$.
For these boundary conditions the definition of weak solutions is adapted as follows.
\begin{Definition}
\label{WsolNavier}
Let $( \rho_{0} , u_{0})$ be an 
 initial data such that $\rho_{0}  \geq 0$, 
 $\rho_{0} \in L^\gamma (\Omega) $, $\rho_{0} u_0^2 \in L^1 (\Omega) $.
 Let $q_{0} := \rho_{0} u_{0}$ and $T>0$.
We say that $(\rho ,u )$ is a finite energy weak solution of the compressible Navier-Stokes system  on $[0,T]$ associated to the initial data $(\rho_{0} , u_{0} ) $ if 
\begin{itemize}
\item $\rho \in C_w ([0,T];  L^\gamma  (\Omega) )$, $\rho u  \in C_w ([0,T];  L^\frac{2\gamma}{\gamma +1}  (\Omega) )$, $\nabla_x u   \in L^2 (0,T ;  L^2  (\Omega) )$, $\rho u^2 \in C_w ([0,T];  L^1 (\Omega) )$,
\item the identity
\begin{equation*}
\int_\Omega \rho (T,\cdot) \phi (T,\cdot) \,  dx - \int_\Omega \rho_{0}  \phi (0,\cdot) \,  dx= \int_0^T \int_\Omega (  \rho \partial_t \phi  +  \rho  u \cdot \nabla_x  \phi ) dx dt
\end{equation*}
holds for any $\phi  \in C^\infty_c ( [0,T] \times \overline{\Omega} ; \R)$,
\item the identity
\begin{eqnarray*}
&& \int_\Omega \rho (T,\cdot) u (T,\cdot)  \cdot \phi (T,\cdot) \,  dx - \int_\Omega q_{0} \cdot  \phi (0,\cdot) \,  dx =  -  \beta \int_0^T \int_{\partial \Omega} u \cdot \phi  \,  d\sigma dt 
\\ &&\quad +\int_0^T \int_\Omega \Big(  \rho u \cdot \partial_t \phi  +  \rho  u \otimes u :  \nabla_x  \phi + p(\rho) \div \phi - \eps \bS (\nabla_x  u) : \nabla_x  \phi   \Big) dx dt
\end{eqnarray*}
holds for any $\phi \in C^\infty_c ( [0,T] \times \overline{\Omega} ; \R^3)$ such that $\phi \cdot n = 0$ on $ \partial \Omega$,
\item the energy inequality:
\begin{equation*}
 \mathcal{E} ( \rho (\tau,\cdot)  ,u  (\tau,\cdot) )  +  \eps \int_0^\tau \int_\Omega \bS (\nabla_x  u) : \nabla_x   u  \, dx dt
+\beta \int_0^\tau \int_{\partial \Omega} | u |^2 \, d\sigma dt  
 \leq   \mathcal{E} ( \rho_0  ,u_0  )  .
\end{equation*}
holds for almost every $\tau \in [0,T]$.
\end{itemize}
\end{Definition}
Above the integration element $d\sigma $ refers to the surface measure on $ \partial \Omega$.
\par
\ \par
Let us now state the second main result of this paper.
\begin{Theorem}[] 
\label{ThNavier}
Let be given $T>0$ and $(\rho^{E} , u^{E} ) $  the strong solution of the Euler equations corresponding to an initial data $(\rho^{E}_{0} , u^{E}_{0} )$  as in Theorem \ref{EulerStrong}.
For any $\eps \in (0,1)$, let $(\rho_{0} , u_{0} ) := (\rho^{\eps}_{0} , u^{\eps}_{0} ) $  be an 
 initial data such that $\rho_{0}  \geq 0$, 
 $\rho_{0} \in L^\gamma (\Omega) $, $\rho_{0} u_0^2 \in L^1 (\Omega) $ and consider $(\rho , u):= (\rho^{\eps} , u^{\eps})$ a corresponding weak solution  on $[0,T]$ in the sense of Definition \ref{WsolNavier}.

Assume that $(\rho_{0} , u_{0} )$  converges to $(\rho^{E}_{0} , u^{E}_{0} )$  when $\eps $ converges to $0$ in the sense of \eqref{cvInit}.
Assume that $\beta := \beta^\eps$ converges to $0$ when $\eps $ converges to $0$.
Then, 
\begin{equation*}
\sup_{(0,T)} \Big( \| \rho - \rho^{E}   \|_{L^{\gamma} (\Omega)} + \int_\Omega  \frac12 \rho | u- u^{E} |^2 dx \Big) (t) \rightarrow  0   \text{ when }\eps  \rightarrow  0 .
\end{equation*}
\end{Theorem}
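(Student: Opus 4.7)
The plan is to carry out the relative-energy strategy of Feireisl, Ja Jin and Novotn\'y \cite{feireisl-jmfm}, comparing the weak solution $(\rho,u)$ of Navier-Stokes directly with the smooth Euler solution $(\rho^E,u^E)$. The essential simplification, as indicated in the introduction, is that for the Navier boundary condition the admissibility requirement on test functions is only $\phi \cdot n = 0$ on $\partial\Omega$, which $u^E$ already satisfies by \eqref{E4}; no Kato-type corrector is needed, in contrast with Theorem \ref{ThKato}.

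First I would introduce the relative energy functional
\begin{equation*}
\mathcal{E}\big(\rho, u \,\big|\, \rho^E, u^E\big) := \int_\Omega \Big( \tfrac12 \rho |u - u^E|^2 + H(\rho) - H'(\rho^E)(\rho - \rho^E) - H(\rho^E) \Big) dx,
\end{equation*}
and observe that the strict convexity of $H(\rho) = \rho^\gamma/(\gamma-1)$, together with the pointwise bounds \eqref{nicedensity} on the Euler density, guarantees that this functional controls both $\|\rho - \rho^E\|_{L^\gamma(\Omega)}^\gamma$ and $\int_\Omega \rho |u - u^E|^2 dx$ up to constants depending only on $(\rho^E, u^E)$. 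Hence it is enough to prove that $\sup_{[0,T]} \mathcal{E}(\rho, u \mid \rho^E, u^E) \to 0$ as $\eps \to 0$.

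Next I would derive the relative energy inequality adapted to the Navier condition, by combining (i)~the energy inequality from Definition \ref{WsolNavier}, (ii)~the weak momentum formulation tested against $u^E$, and (iii)~the continuity equation tested against $\tfrac12|u^E|^2 - H'(\rho^E)$, and then exploiting the Euler equations \eqref{E1}--\eqref{E4} together with the bounds \eqref{nicedensity}. A computation of the same type as in \cite{feireisl-jmfm} leads to an inequality of the schematic form
\begin{align*}
\mathcal{E}(\tau) &+ \eps \int_0^\tau \!\! \int_\Omega \bS(\nabla_x u) : \nabla_x (u - u^E) \, dx dt + \beta \int_0^\tau \!\! \int_{\partial\Omega} u \cdot (u - u^E) \, d\sigma dt \\
& \leq \mathcal{E}(0) + C \int_0^\tau \mathcal{E}(t) \, dt,
\end{align*}
where the rightmost term collects all the residual contributions, which, after using the Euler equations, are quadratic in the relative variables $(\rho - \rho^E, \sqrt{\rho}(u - u^E))$ with coefficients bounded in terms of $\|\nabla_x u^E\|_\infty$, $\|\nabla_x \rho^E\|_\infty$ and the density bounds \eqref{nicedensity}.

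It then remains to handle the viscous and boundary cross terms on the left. Writing $\bS(\nabla_x u) : \nabla_x (u - u^E) = \bS(\nabla_x u) : \nabla_x u - \bS(\nabla_x u) : \nabla_x u^E$, the first summand is nonnegative by \eqref{coercifnoslip}, while Cauchy-Schwarz and Young's inequality yield
\begin{equation*}
\Big| \eps \int_0^\tau \!\! \int_\Omega \bS(\nabla_x u) : \nabla_x u^E \, dx dt \Big| \leq \tfrac12 \eps \int_0^\tau \!\! \int_\Omega \bS(\nabla_x u) : \nabla_x u \, dx dt + C \eps \, \|\nabla_x u^E\|_{L^2((0,T)\times\Omega)}^2,
\end{equation*}
so the first part is absorbed on the left and the remainder is $O(\eps)$. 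Analogously, the boundary cross term is bounded by $\tfrac12 \beta \int_0^\tau \!\! \int_{\partial\Omega} |u|^2 d\sigma dt + \tfrac12 \beta \int_0^\tau \!\! \int_{\partial\Omega} |u^E|^2 d\sigma dt$, where again the first half is absorbed and the second is $O(\beta^\eps)$. Combined with the hypothesis \eqref{cvInit} on the initial data, a standard Gronwall argument on the resulting estimate for $\mathcal{E}(\tau)$ then yields the claimed convergence.

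The only substantive obstacle is the boundary cross term $\beta \int_{\partial\Omega} u \cdot u^E d\sigma$, which cannot be absorbed into the dissipation on the left because $u^E_{tan}$ does not vanish on $\partial\Omega$; this is exactly the reason the assumption $\beta^\eps \to 0$ has to be imposed. No analogous condition is needed on $\eps$ itself, since the corresponding viscous cross term vanishes automatically as soon as $\eps \to 0$, which is precisely what makes this Navier setting much softer than the no-slip setting of Theorem \ref{ThKato}.
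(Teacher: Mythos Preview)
Your proposal is correct and follows essentially the same route as the paper: apply the relative energy inequality of \cite{feireisl-jmfm} directly with $(r,U)=(\rho^E,u^E)$ (no corrector needed since $u^E\cdot n=0$), use the Euler equations to reduce the remainder to terms bounded by $C\,\mathcal{E}(\rho,u\mid\rho^E,u^E)$, absorb half of the viscous and boundary cross terms into the dissipation leaving $O(\eps)+O(\beta)$ residuals, and conclude by Gronwall. The only cosmetic difference is that the paper quotes the relative energy inequality as a black box (Theorem~\ref{EntropyNavier}) and keeps the cross terms on the right-hand side, whereas you sketch its derivation and place them on the left; the computations are identical.
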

Theorem \ref{ThNavier} extends to the compressible case the earlier results \cite{Iftimie-Planas,Paddick,BGP,xin} which tackled the homogeneous incompressible case.
As in these works Theorem \ref{ThNavier} fails to tackle the case where $\beta$ is a $O(1)$ when $\eps $ converges to $0$, which seems of special interest in view of the kinetic derivation of the Navier condition performed in \cite{MasmoudiLSR,BGP}.


\section{Proof of Theorem \ref{ThKato}}
\label{ProofTheoremKato}

In this section we prove Theorem  \ref{ThKato}.
We will proceed in three steps. First we will recall the recent result \cite{feireisl-iumj} about some relative energy inequalities satisfied by the weak solutions of the compressible Navier-Stokes system.
These inequalities provide a non-symmetric ``measure'' of the difference  between a weak solution of the compressible Navier-Stokes system and a smooth test function  satisfying the no-slip condition.
As the smooth solutions of the Euler equations do not satisfy the tangential part of the no-slip condition, one cannot apply directly these relative energy inequalities. 
In order to overcome this difficulty we will follow a strategy used by Kato in the incompressible setting, see \cite{Tosio}, by constructing a ``fake'' layer which, added to the Euler solution, provided a smooth test function which satisfies the no-slip condition and yet quite close to the  Euler solution.

\subsection{Relative energy inequality}

Let us recall first a quite general definition of the notion of relative entropy. 
Let be given an integer $N \geq 1$ and $\mathcal{V}$ an open subset of $\R^N$.
Given a smooth function $f: \mathcal{V} \rightarrow \R$ and $v,w \in  \mathcal{V}$ the relative entropy $f(v \vert w)$  is defined by
\begin{equation*}
f(v \vert w) := f(v) - f'(w) \cdot (v-w) - f(w) .
\end{equation*}
Applying this definition to the energy defined in \eqref{nrj}
leads to the introduction of the following  relative energy $\E ( [ \rho , u] \vert [r , U ] )$ of $ (\rho , u)$ with respect to $ (r , U )$:
\begin{eqnarray*}
 \E ( [ \rho , u] \vert [r , U ] ) :=  \int_\Omega E( [ \rho , u] \vert [r , U ] ) \, dx ,
 \text{ with } E( [ \rho , u] \vert [r , U ] ) := 
   \frac12 \rho | u-U |^2 +  H( \rho \vert r )  ,
   \\  \text{ where }  H( \rho \vert r )  := \frac{ \rho^{\gamma}}{\gamma - 1} -    \frac{\gamma (\rho - r)   r^{\gamma - 1}}{\gamma - 1}   -  \frac{r^{\gamma}}{\gamma - 1} .
\end{eqnarray*}
Note that, since $p$ is strictly convex, the quantity $H( \rho \vert r )$ is nonnegative and vanishes only when $\rho = r$.
Indeed  $H( \rho \vert r )$  provides a nice control of the difference between $\rho$ and $r$, since according to \cite[Eq. (4.15)]{feireisl-iumj}, for any compact 
$K \subset (0,+\infty)$ there exists two positive constants $c_1$ and $c_2$ such that for any $\rho \geq 0$ and for any $r \in K$, 
\begin{equation}
\label{Coince}
c_1 \Big( | \rho - r |^2 1_{| \rho - r | < 1} + | \rho - r |^\gamma 1_{| \rho - r | \geq 1}\Big) \leq  H( \rho \vert r )
\leq c_2 \Big( | \rho - r |^2 1_{| \rho - r | < 1} + | \rho - r |^\gamma 1_{| \rho - r | \geq 1}\Big) .
\end{equation}
In particular, using that the domain $\Omega$ is bounded, we infer that  for any compact 
$K \subset (0,+\infty)$  there exists a constant $C >0$ such that for any functions $\rho : \Omega \rightarrow [ 0,\infty )$ and $r: \Omega \rightarrow K$,
\begin{equation}
\label{control}
C \| \rho - r  \|_{L^{\gamma } (\Omega)}^{\gamma } \leq  \Big(  \int_\Omega  H( \rho \vert r ) dx  \Big)^{\gamma} + \int_\Omega   H( \rho \vert r )  dx ,\quad   C  \int_\Omega    H( \rho \vert r ) dx  \leq  \| \rho - r  \|^{\gamma }_{L^{\gamma } (\Omega)} +   \| \rho - r  \|^{2}_{L^{\gamma } (\Omega)}   .
\end{equation}
Let us now recall the following nice recent result which is a slight rephrasing of  \cite[Th. 2.1]{feireisl-jmfm}.
\begin{Theorem}[\cite{feireisl-jmfm}] 
\label{Entropy}
Let $T>0$ and $(\rho , u)$ be a  finite energy weak solution of the compressible Navier-Stokes system on $[0,T ]$ associated to an initial data $(\rho_{0} , u_{0} )$ as in Theorem \ref{LF}.
Then, for any smooth test functions $(r,U) :  [0,T] \times \overline{\Omega} \rightarrow (0,+\infty)  \times \R^{3}$ satisfying the no-slip condition $U  \vert_{\partial \Omega}= 0 $, we have the following relative energy inequality:
\begin{equation}
\label{relative}
\E ( [ \rho , u] \vert [r , U ] )   (\tau ) 
+ \int_0^\tau \int_\Omega \eps \bS (\nabla_x  u) : \nabla_x   u \, dx dt
 \leq  \E_{0}  + \int_0^\tau \mathcal{R}  ( \rho , u , r , U ) dt ,
  \end{equation}
for almost every $\tau \in (0,T)$, where
\begin{equation}
\label{IE}
\E_0 :=  \E ( [ \rho_0 , u_0 ] \vert [r(0,\cdot)  , U(0,\cdot) ] )  ,
 \end{equation}
and
\begin{eqnarray}
\label{reste}
\mathcal{R}  ( \rho , u , r , U ) :=
\int_\Omega  \rho \Big(  \partial_t U  +   (u \cdot \nabla_x ) U \Big) \cdot (U-u) dx 
+ \int_\Omega  \eps \bS (\nabla_x  u) : \nabla_x  U dx 
\\ \nonumber \quad +  \int_\Omega  \Big( (r- \rho) \partial_t H' (r) +  \nabla_x   H' (r)\cdot (rU-\rho u)    \Big) dx
- \int_\Omega   (\div U)  \Big( p(\rho) - p(r)   \Big) dx .
 \end{eqnarray}
\end{Theorem}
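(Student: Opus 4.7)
The plan is to derive \eqref{relative} by combining the energy inequality \eqref{energynoslip} satisfied by $(\rho,u)$ with the weak formulations of the continuity and momentum equations, tested against carefully chosen functions built from the smooth pair $(r,U)$. The starting observation is the algebraic expansion
\begin{equation*}
\E([\rho,u]\vert[r,U]) = \mathcal{E}(\rho,u) - \int_\Omega \rho\, u\cdot U\, dx + \int_\Omega \tfrac{1}{2}\rho|U|^2\, dx - \int_\Omega \rho\, H'(r)\, dx + \int_\Omega \bigl(rH'(r)-H(r)\bigr)\, dx,
\end{equation*}
so that the evolution of the relative energy reduces to that of five explicit quantities, each of which is controlled by one of our ingredients.

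The first term is handled by \eqref{energynoslip}, which simultaneously provides the viscous dissipation on the left-hand side of \eqref{relative}. The next two integrals are obtained from the weak continuity equation applied to the admissible scalar tests $\phi = \tfrac{1}{2}|U|^2$ and $\phi = H'(r)$ (both licit because $(r,U)$ is smooth on $[0,T]\times\overline{\Omega}$ with $r$ bounded away from $0$), producing, respectively, time integrals of $\rho\, U\cdot(\partial_t U+(u\cdot\nabla_x)U)$ and $\rho(\partial_t H'(r)+u\cdot\nabla_x H'(r))$. The cross term $\int_\Omega \rho u\cdot U\, dx$ is treated by testing the weak momentum equation against $\phi = U$; this is the one step where the no-slip hypothesis $U\vert_{\partial\Omega}=0$ is essential, since it allows $U$ to be approximated by functions in $C^\infty_c(\Omega;\R^3)$. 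Finally, the purely $r$-dependent integrand is smooth and its evolution is simply $\int_0^\tau\!\!\int_\Omega r\,\partial_t H'(r)\, dx\, dt$.

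Collecting and regrouping, the convective cross-terms reassemble algebraically into $\rho\,(\partial_t U+(u\cdot\nabla_x)U)\cdot(U-u)$, matching the first block of \eqref{reste}, while the viscous contribution $-\eps\bS(\nabla_x u):\nabla_x U$ from the momentum test enters the remainder with the opposite overall sign inherited from $-\int\rho u\cdot U$, giving $+\eps\bS(\nabla_x u):\nabla_x U$. The density-related pieces $-\rho\,\partial_t H'(r) - \rho u\cdot\nabla_x H'(r) + r\,\partial_t H'(r)$ regroup into $(r-\rho)\,\partial_t H'(r) + \nabla_x H'(r)\cdot(rU-\rho u)$ once one invokes the thermodynamic identity $rH''(r)=p'(r)$ to recognise $rU\cdot\nabla_x H'(r)=U\cdot\nabla_x p(r)$; an integration by parts (licit thanks to $U\vert_{\partial\Omega}=0$) then yields $\int_\Omega rU\cdot\nabla_x H'(r)\, dx = -\int_\Omega p(r)\div U\, dx$, which combines with the pressure term $-\int p(\rho)\div U\, dx$ from the momentum test to produce the last block $-\int_\Omega \div U\,(p(\rho)-p(r))\, dx$ of \eqref{reste}. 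The principal technical obstacle is the time-localisation: Definition \ref{Wsol} stipulates tests in $C^\infty_c([0,T]\times\overline{\Omega})$, so to obtain \eqref{relative} at an arbitrary $\tau\in(0,T)$ one must approximate $\mathbf{1}_{[0,\tau]}(t)$ by smooth temporal cutoffs and pass to the limit using the weak-star time-continuity $\rho,\rho u\in C_w$ from Definition \ref{Wsol}, which delivers the stated inequality for almost every $\tau$.
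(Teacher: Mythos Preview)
The paper does not give its own proof of this theorem: it is quoted as ``a slight rephrasing of \cite[Th.~2.1]{feireisl-jmfm}'' and used as a black box. Your sketch is correct and is precisely the standard derivation carried out in \cite{feireisl-jmfm}: expand $\E([\rho,u]\vert[r,U])$ algebraically, control $\mathcal{E}(\rho,u)$ by the energy inequality \eqref{energynoslip}, test the weak continuity equation with $\tfrac12|U|^2$ and $H'(r)$, test the weak momentum equation with $U$ (which requires $U\vert_{\partial\Omega}=0$), differentiate the purely $r$-dependent term directly, and regroup using $rH''(r)=p'(r)$ together with the integration by parts $\int_\Omega U\cdot\nabla_x p(r)\,dx=-\int_\Omega p(r)\,\div U\,dx$. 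The time-localisation remark is also to the point.
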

Let us stress that Theorem \ref{Entropy} can be thought as a counterpart of 
\cite[Proposition 1]{lellis} which establishes that finite energy weak solutions of the incompressible Euler equations are dissipative in the sense of Lions.

\subsection{A Kato type ``fake'' layer}
\label{Fake}
The goal of this section is to prove the following result, where we make use of the Landau notations $o(1)$ and $O(1)$ for quantities respectively converging to $0$ and bounded with respect to the limit $\eps \rightarrow 0^{+}$.
\begin{Proposition}
\label{PropositionFake}
Under the assumptions of Theorem  \ref{ThKato}
there exists  $v_{F} :=v_{F}^{\eps}   \in  C ( [0,T ]  \times \overline{\Omega} ;  \R^{3} )$, supported in  $ \Gamma_{c\eps} $, such that 
\begin{eqnarray}
 \label{Fake0}
v_{F} = O( 1 ) \text{ in }C ( [0,T ]  \times \overline{\Omega}   ),
\\  \label{Fake7}
 u^E  - v_{F} = 0  \text{ on } \partial \Omega ,
\\ \label{Fake1}
v_{F} = O( \eps^{\frac{1}{p}}) \text{ in }C ( [0,T ]  ;  L^{p} (\Omega) ), \text{ for } 1 \leq p < +\infty ,
\\ \label{Fake2}
\partial_{t} v_{F} = O( \eps^{\frac{1}{p}}) \text{ in }C ( [0,T ]  ;  L^{p} (\Omega)), \text{ for } 1 \leq p < +\infty ,
\\   \label{Fake4}
\| \nabla_x  v_{F} \|_{ L^{\infty} ([0,T ] ;   L^{2} ( \Gamma_{c\eps} )) } = O( \eps^{-\frac{1}{2}}  ) ,
\\   \label{FakeImp1}
d_\Omega \,   \nabla_x  v_{F} = O( \eps^{\frac{1}{2}} ) \text{ in }   L^{\infty} ( [0,T ] ;   L^{2} (   \Omega )),
\\   \label{FakeImp2}
d_\Omega ^{2} \, \nabla_x  v_{F} = O( \eps ) \text{ in }   C( [0,T ]  \times \overline{\Omega} ),
 \\ \label{Fake-2}
\div v_{F} =  O( 1 )  \text{ in } C ( [0,T ]  \times \overline{\Omega} ) ,
\\ \label{Fake-1}
\div v_{F} =  O( \eps^{\frac{1}{p}} )  \text{ in } C ( [0,T ]  ;  L^{p} (\Omega)),  \text{ for } 1 \leq p < +\infty .
\end{eqnarray}
\end{Proposition}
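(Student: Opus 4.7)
The plan is to construct $v_F$ by lifting the trace $u^E|_{\partial\Omega}$ into $\Omega$ along the inner normals and truncating at normal distance $c\eps$ with a sharp smooth cutoff, in the spirit of Kato's boundary layer corrector but without imposing the divergence-free constraint needed in the incompressible setting.

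First I would fix boundary normal coordinates. Since $\partial\Omega$ is smooth and compact, there exists $\delta_0 > 0$ such that the nearest-point projection $\pi : \Gamma_{\delta_0} \to \partial\Omega$ is smooth, $d_\Omega$ is smooth on $\Gamma_{\delta_0}$, and $\nabla_x d_\Omega(x) = -n(\pi(x))$. Fix $\chi \in C^\infty([0,+\infty);[0,1])$ equal to $1$ on $[0,1/2]$ and vanishing on $[1,+\infty)$, and, for $\eps$ small enough that $c\eps < \delta_0$, define
$$v_F(t,x) := \chi\!\bigl(d_\Omega(x)/(c\eps)\bigr)\, u^E(t,\pi(x))$$
on $\Gamma_{\delta_0}$, extended by $0$ outside. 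This is smooth thanks to Theorem \ref{EulerStrong} and the smoothness of $\chi, \pi, d_\Omega$, and the support is contained in $\Gamma_{c\eps}$.

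Next I would verify the estimates in order. The boundary condition \eqref{Fake7} follows from $\chi(0)=1$ and $\pi=\Id$ on $\partial\Omega$, and \eqref{Fake0} from the uniform $C$-bound on $u^E$. The $L^p$ estimates \eqref{Fake1}-\eqref{Fake2} reduce to $|\Gamma_{c\eps}|=O(\eps)$ combined with $C$-bounds on $u^E$ and $\partial_t u^E$. Differentiating yields
$$\nabla_x v_F = \frac{1}{c\eps}\chi'\!\bigl(d_\Omega/(c\eps)\bigr)\, \nabla d_\Omega \otimes u^E(\pi) + \chi\!\bigl(d_\Omega/(c\eps)\bigr)\,(\nabla_y u^E)(\pi)\,\nabla_x\pi,$$
so $|\nabla_x v_F| = O(1/\eps)$ pointwise on $\Gamma_{c\eps}$; together with $|\Gamma_{c\eps}|=O(\eps)$ this gives \eqref{Fake4}, and the additional factor $d_\Omega \leq c\eps$ on the support yields \eqref{FakeImp1} and \eqref{FakeImp2}.

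The main obstacle is the pair \eqref{Fake-2}-\eqref{Fake-1}: the singular contribution $\frac{1}{c\eps}\chi'(\cdot)\,\nabla d_\Omega \cdot u^E(\pi)$ to $\div v_F$ is a priori of order $1/\eps$, which would ruin both estimates. The key cancellation is that $u^E(t,\pi(x))$ is, by construction, constant along the normals, so at every $x\in\Gamma_{\delta_0}$ one has $u^E(t,\pi(x))\cdot n(\pi(x))=0$ by the Euler slip condition \eqref{E4}, while $\nabla d_\Omega(x)=-n(\pi(x))$; hence the singular term vanishes identically in the tube. Only the regular contribution $\chi(\cdot)\,\div_x(u^E\circ\pi)$ survives, which is $O(1)$ and supported in $\Gamma_{c\eps}$, yielding \eqref{Fake-2} and, via $|\Gamma_{c\eps}|=O(\eps)$, the $L^p$ bound \eqref{Fake-1}.
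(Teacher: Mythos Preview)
Your proof is correct and follows the same Kato-type corrector strategy as the paper, with one technical variant worth noting. The paper takes simply $v_F = \xi(d_\Omega/(c\eps))\, u^E$, i.e.\ multiplies the full Euler velocity by the cutoff, whereas you first project to the boundary via $\pi$ and use $u^E\circ\pi$. This changes only the divergence argument: in your construction the singular normal contribution $\frac{1}{c\eps}\chi'(\cdot)\,\nabla d_\Omega\cdot u^E(\pi)$ vanishes identically because $u^E(\pi)\cdot n(\pi)=0$ exactly by \eqref{E4}; the paper instead must observe that $u^E\cdot n/d_\Omega$ extends smoothly across $\partial\Omega$, so that the $1/\eps$ from $\nabla z$ is cancelled by a factor $d_\Omega$ coming from $u^E\cdot\nabla d_\Omega$. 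Both routes are standard and essentially equivalent; the paper's choice avoids introducing the nearest-point projection, while yours makes the cancellation in \eqref{Fake-2}--\eqref{Fake-1} more transparent. One small caveat: the paper's subsequent estimate of $\mathcal{R}_{2,c}$ invokes the specific product form $v_F=z\,u^E$, so if your corrector were used there the decomposition would need a cosmetic adjustment, but the Proposition as stated is fully proved by your construction.
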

Let us recall that $d_\Omega $ and   $ \Gamma_{c\eps} $ are defined in \eqref{defdist}.
\begin{proof}
Let  $\xi: [ 0,+\infty) \rightarrow [ 0,+\infty) $ be a smooth cut-off function such that $\xi(0) = 1$ and  $\xi(r) = 0$ for $r \geq 1$. 
We define 
\begin{equation}
\label{forme}
z(x) := \xi ( \frac{ d_\Omega (x)}{c \eps} ), \quad 
 \tilde{\xi}(r) := r  \xi' (r), \quad   \tilde{z}(x) := \tilde{\xi} ( \frac{ d_\Omega (x)}{c \eps} ), \quad  \hat{\xi}(r) := r^{2}  \xi' (r),   \quad \hat{z}(x) := \tilde{\xi} ( \frac{ d_\Omega (x)}{c \eps} ) \text{  and  }  v_F  :=  z  u^E .
\end{equation}
We easily see that  \eqref{Fake0}-\eqref{Fake4} are satisfied.
In particular let us stress that the leading order term of $\nabla_x  v_{F}$ is given by the normal derivative
\begin{equation*}
n \cdot \nabla_x  v_{F} = z n \cdot \nabla_x   u^E  + \frac{1}{c \eps}  \xi' ( \frac{ d_\Omega (x)}{c \eps} )  u^E .
\end{equation*}
Thus the leading order term of $d_\Omega  \,  \nabla_x  v_{F}$ and $d_\Omega ^{2}  \, \nabla_x  v_{F}$ are given by the respective contributions of the second term in the right hand side above, which can be recast as 
$\tilde{z}  u^E $ and $ c \eps \hat{z}  u^E $,
from which we infer \eqref{FakeImp1} and \eqref{FakeImp2}.

Finally let us introduce the function 
$\phi := \frac{ u^E \cdot n }{ d_\Omega }$ which is smooth up to the boundary because of \eqref{E4}.
Then we observe that 
\begin{equation*}
 \div v_{F}= z \div u^E + u^E \cdot \nabla_x  z  =  z \div u^E + \phi  \tilde{z} ,
\end{equation*}
which yields  \eqref{Fake-2} and \eqref{Fake-1}.
\end{proof}

\subsection{Core of the proof of Theorem \ref{ThKato}}

 First using \eqref{energynoslip},  \eqref{coercifnoslip} and 
 \eqref{cvInit} we easily obtain 
that the norms 
\begin{equation}
\label{ap}
\| \rho  \|_{L^\infty  (0,T; L^\gamma  (\Omega) )} +  \| \rho u^{2} \|_{L^\infty  (0,T; L^1  (\Omega) )} + \sqrt{\eps}   \| \nabla_x u  \|_{L^2 ( (0,T) \times \Omega )}  = O(1) ,
\end{equation}
with respect to $\eps \rightarrow 0$.

Let us also say  immediately that we will make use several times of \eqref{Coince} and \eqref{control}
 with the compact 
$K := [\inf_{(0,T) \times \Omega} \rho^{E}  , \sup_{(0,T) \times \Omega} \rho^{E}  ]$
(see \eqref{nicedensity}).

Now the basic idea is to apply \eqref{relative} to 
$(r , U) =  (\rho^{E}, u^{E}   - v_{F} ).$
The following lemma isolates what exactly we expect from this idea.
\begin{Lemma}
\label{lemma}
If almost everywhere  in $ [0,T]$, there holds 
\begin{equation}
\label{crucial}
|   \mathcal{R}  ( \rho , u , \rho^{E} , U ) |  \leq  \E ( [ \rho , u] \vert [ \rho^{E}  , u^{E} ] )  + o(1) , 
\end{equation}
where the $o(1) $ is with respect to $L^{1} (0,T)$, then Theorem \ref{ThKato} holds true.
\end{Lemma}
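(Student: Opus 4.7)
The plan is to invoke the relative energy inequality \eqref{relative} of Theorem \ref{Entropy} with the smooth test pair $(r,U) := (\rho^{E}, u^{E} - v_{F})$, where $v_{F}$ is the fake layer provided by Proposition \ref{PropositionFake}. This choice is admissible: $r=\rho^{E}$ is smooth and, thanks to \eqref{nicedensity}, stays inside a fixed compact subset of $(0,+\infty)$, while $U$ vanishes on $\partial\Omega$ by \eqref{Fake7}. The viscous dissipation term on the left-hand side of \eqref{relative} is nonnegative by \eqref{coercifnoslip} and will simply be discarded.

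The next step is to compare the relative energy $\mathcal{E}([\rho,u]\vert [\rho^{E},U])$ which appears on the left-hand side of \eqref{relative} with the target quantity $\mathcal{E}([\rho,u]\vert [\rho^{E},u^{E}])$ of \eqref{cv}. Expanding the square gives
\begin{equation*}
\mathcal{E}([\rho,u]\vert[\rho^{E},U]) - \mathcal{E}([\rho,u]\vert[\rho^{E},u^{E}]) = \int_{\Omega} \rho\, (u-u^{E})\cdot v_{F}\, dx + \tfrac12 \int_{\Omega}\rho\,|v_{F}|^{2}\,dx .
\end{equation*}
Using the a priori bounds \eqref{ap} (which imply in particular $\sqrt{\rho}\, u \in L^{\infty}_{t}L^{2}_{x}$ and $\rho \in L^{\infty}_{t}L^{\gamma}_{x}$), together with the $L^{p}$-decay \eqref{Fake1} of $v_{F}$ applied for suitable exponents conjugate to $2\gamma/(\gamma+1)$ and $\gamma$, Hölder's inequality shows that both integrals are $o(1)$ uniformly in $\tau \in [0,T]$. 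The very same type of estimate, combined with the convergence hypothesis \eqref{cvInit} and with \eqref{control}, also yields $\mathcal{E}_{0} = o(1)$.

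Plugging the assumption \eqref{crucial} into the right-hand side of \eqref{relative} and integrating the $L^{1}(0,T)$-small error in time, we obtain, for a.e.\ $\tau\in (0,T)$,
\begin{equation*}
\mathcal{E}([\rho,u]\vert[\rho^{E},u^{E}])(\tau) \leq o(1) + \int_{0}^{\tau} \mathcal{E}([\rho,u]\vert[\rho^{E},u^{E}])(t)\, dt ,
\end{equation*}
where the $o(1)$ collects the initial energy, the $v_{F}$-perturbation of the left-hand side, and the $L^{1}$-small error from \eqref{crucial}. Since the integrand on the right-hand side lies in $L^{\infty}(0,T)$ by \eqref{ap} and \eqref{nicedensity}, Gronwall's lemma yields $\sup_{\tau\in(0,T)}\mathcal{E}([\rho,u]\vert[\rho^{E},u^{E}])(\tau)=o(1)$. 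The desired conclusion \eqref{cv} then follows: the kinetic part of the relative energy is exactly the second summand in \eqref{cv}, whereas the pressure contribution $\int_{\Omega} H(\rho\vert\rho^{E})\,dx$ is converted into the required bound on $\|\rho-\rho^{E}\|_{L^{\gamma}(\Omega)}$ via \eqref{control}.

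The only delicate point at this stage is the routine but tedious Hölder bookkeeping needed to certify that the $v_{F}$-dependent correction terms and the perturbation of $\mathcal{E}_{0}$ are all genuinely $o(1)$; the substantial analytic work of Theorem \ref{ThKato} is hidden inside the assumption \eqref{crucial}, whose verification will exploit the full strength of the properties \eqref{Fake0}--\eqref{Fake-1} of the fake layer together with the Kato dissipation condition \eqref{KatoCondition}, and constitutes the real obstacle of the proof.
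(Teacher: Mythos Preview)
Your proof is correct and follows essentially the same route as the paper: apply the relative energy inequality \eqref{relative} with $(r,U)=(\rho^{E},u^{E}-v_{F})$, discard the nonnegative dissipation, compare $\E([\rho,u]\vert[\rho^{E},U])$ with $\E([\rho,u]\vert[\rho^{E},u^{E}])$ up to $o(1)$, show $\E_{0}=o(1)$ from \eqref{cvInit} and \eqref{control}, and close with Gronwall and \eqref{control}. The only cosmetic difference is that you expand the square exactly and bound both the cross term $\int_{\Omega}\rho\,(u-u^{E})\cdot v_{F}\,dx$ and the quadratic term, whereas the paper uses the elementary inequality $\tfrac12|a|^{2}\leq |a+b|^{2}+|b|^{2}$ to reduce to controlling only $\int_{\Omega}\rho|v_{F}|^{2}\,dx$; either way the required smallness follows from \eqref{ap} and \eqref{Fake1}.
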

\begin{proof}
Let us first observe that 
\begin{equation*}
\frac{1}{2} \E ( [ \rho , u] \vert [ \rho^{E}  , u^{E} ] ) \leq  \E ( [ \rho , u] \vert [ r  , U ] ) + \frac{1}{2} \int_\Omega  \rho   | v_{F}   |^{2}  dx 
=   \E ( [ \rho , u] \vert [ r  , U ] ) + o(1)
\end{equation*}
in  $L^{1} (0,T)$, thanks to Holder's inequality, \eqref{ap} and  \eqref{Fake1}.

Now, using \eqref{cvInit} and  the second inequality in \eqref{control} we obtain that 
\begin{equation*}
\E ( [ \rho_{0} , u_{0}] \vert [ \rho^{E}_{0}  , u^{E}_{0} ] ) \rightarrow  0   \text{ when }\eps  \rightarrow  0 .
\end{equation*}

Then it is sufficient to apply the Gronwall lemma to \eqref{relative} applied to $(r , U) =  (\rho^{E}, u^{E}  - v_{F} )$ taking into account \eqref{crucial} to conclude that 
\begin{equation*}
\sup_{t \in (0,T)} \Big(     \E ( [ \rho , u] \vert [ \rho^{E}  , u^{E} ] )   \Big) (t) \rightarrow  0   \text{ when }\eps  \rightarrow  0 .
\end{equation*}
Finally it remains to use the first inequality in \eqref{control} to complete the proof of Lemma \eqref{lemma}.
\end{proof}
Let us point out that the proof of Lemma \ref{lemma} above makes no use of the second term of the left hand side of \eqref{relative}.

\par
\ \par
We now prove an estimate of the form \eqref{crucial}.
We first decompose the first term of the right hand side in \eqref{reste} to get 
\begin{eqnarray*}
\mathcal{R}  ( \rho , u , \rho^{E} , U ) =
\int_\Omega  \rho \Big(  \partial_t u^{E}  +   (u^{E} \cdot \nabla_x ) u^{E} \Big) \cdot (U-u) dx 
+ \int_\Omega  \rho  \Big(   ((u - u^{E} )  \cdot \nabla_x ) u^{E} \Big)  \cdot (U-u)  dx 
\\ \nonumber \quad - \int_\Omega  \rho \Big(  \partial_t  v_{F} +   (u \cdot \nabla_x ) v_{F} \Big) \cdot (U-u)  \,   dx 
 + \int_\Omega \eps \bS (\nabla_x  u) : \nabla_x  U  \,   dx 
\\ \nonumber \quad 
+  \int_\Omega  \Big( (\rho^{E}- \rho) \partial_t H' (\rho^{E}) +  \nabla_x   H' (\rho^{E})\cdot (\rho^{E}U-\rho u)    \Big) dx
- \int_\Omega   (\div U)  \Big( p(\rho) - p(\rho^{E})   \Big) dx .
 \end{eqnarray*}

Then we deduce from \eqref{E1}-\eqref{E2} that 
\begin{equation}
\label{eulerrecast}
\partial_t  u^{E}  +  (u^{E}\cdot  \nabla_x ) u^{E} =   - \nabla_x   H' (\rho^E ) ,
\end{equation}
so that 
\begin{eqnarray*}
\mathcal{R}  ( \rho , u , \rho^{E} , U ) =
 \int_\Omega  \rho  \Big(   ((u - u^{E} )  \cdot \nabla_x ) u^{E} \Big)  \cdot (U-u)  dx 
 - \int_\Omega  \rho \Big(  \partial_t  v_{F} +   (u \cdot \nabla_x ) v_{F} \Big) \cdot (U-u) dx 
\\ \nonumber \quad  + \int_\Omega \eps \bS (\nabla_x  u) : \nabla_x  U dx 
+  \int_\Omega   (\rho^{E}- \rho) \Big( \partial_t H' (\rho^{E}) +  U \cdot \nabla_x   H' (\rho^{E})   \Big) dx 
- \int_\Omega   (\div U)  \Big( p(\rho) - p(\rho^{E})   \Big) dx .
 \end{eqnarray*}
Let us decompose the two last terms into
\begin{eqnarray*}
 \int_\Omega   (\rho^{E}- \rho) \Big( \partial_t H' (\rho^{E}) +  U \cdot \nabla_x   H' (\rho^{E})   \Big) dx 
- \int_\Omega   (\div U)  \Big( p(\rho) - p(\rho^{E})   \Big) dx
=
 \\ \quad \int_\Omega   (\rho^{E}- \rho) \Big( \partial_t H' (\rho^{E}) + u^{E}  \cdot \nabla_x   H' (\rho^{E})   \Big) dx 
 - \int_\Omega    (\div u^{E})  \Big( p(\rho) - p(\rho^{E})   \Big) dx 
\\- \int_\Omega   (\rho^{E}- \rho) \Big(  v_{F} \cdot \nabla_x   H' (\rho^{E})   \Big) dx 
+ \int_\Omega   (\div v_{F})  \Big( p(\rho) - p(\rho^{E})   \Big) dx .
 \end{eqnarray*}
Moreover using again  \eqref{E1} we obtain  that 
\begin{equation}
\label{Heq}
\partial_t H' (  \rho^{E}) +  u^{E} \cdot \nabla_x   H' (\rho^{E})  = - ( \div u^{E})  p'(\rho^{E}) ,
\end{equation}
so that 
\begin{eqnarray*}
\mathcal{R}  ( \rho , u , \rho^{E} , U ) =
 \int_\Omega  \rho  \Big(   ((u - u^{E} )  \cdot \nabla_x ) u^{E} \Big)  \cdot (U-u)  dx 
 - \int_\Omega  \rho \Big(  \partial_t  v_{F} +   (u \cdot \nabla_x ) v_{F} \Big) \cdot (U-u) dx 
 \\ \nonumber \quad + \int_\Omega \eps \bS (\nabla_x  u) : \nabla_x  U dx 
- \int_\Omega   (\div u^{E})  \Big( p(\rho) - p(\rho^{E})  - p'(\rho^{E}) (\rho -\rho^{E})  \Big) dx 
\\ \nonumber \quad  - \int_\Omega   (\rho^{E}- \rho) \Big(  v_{F} \cdot \nabla_x   H' (\rho^{E})   \Big) dx 
+ \int_\Omega  ( \div v_{F} ) \Big( p(\rho) - p(\rho^{E})   \Big) dx
=: \mathcal{R}_{1} + \ldots + \mathcal{R}_{6} .
 \end{eqnarray*}
We decompose $ \mathcal{R}_{1}$ into 
\begin{equation*}
 \mathcal{R}_{1} =  - \int_\Omega  \rho  ((u - u^{E} )  \otimes (u - u^{E} ) ) : \nabla_x  u^{E}  \,   dx 
 -  \int_\Omega  \rho  \Big(   ((u - u^{E} )  \cdot \nabla_x ) u^{E} \Big)  \cdot v_{F} dx .
\end{equation*}
Therefore, using the Cauchy-Schwarz inequality, we obtain that there exists a constant $C>0$ such that 
\begin{eqnarray*}
|   \mathcal{R}_{1} |  &\leq&  C  \int_\Omega  \rho   | u - u^{E}  |^{2}  dx + 
C  \Big( \int_\Omega  \rho   | u - u^{E}  |^{2}  dx \Big)^{\frac{1}{2}}  \Big( \int_\Omega  \rho   | v_{F}   |^{2}  dx \Big)^{\frac{1}{2}} ,
\\ &\leq& C  \int_\Omega  \rho   | u - u^{E}  |^{2}  dx
+ \tilde{C}  \Big( \int_\Omega  \rho   | u - u^{E}  |^{2}  dx \Big)^{\frac{1}{2}}  \eps^{\frac{\gamma}{\gamma - 1}} ,
\end{eqnarray*}
thanks to Holder's inequality, \eqref{ap} and  \eqref{Fake1}.
Then using Young's inequality, we get that  there exists a constant $C>0$ such that 
\begin{equation}
\label{com}
|   \mathcal{R}_{1} |  \leq  C  \int_\Omega  \rho   | u - u^{E}  |^{2}  \,    dx + C  \eps^{\frac{2\gamma}{\gamma - 1}} \leq  C  \E ( [ \rho , u] \vert [ \rho^{E}  , u^{E} ] )  + o(1) .
\end{equation}
We decompose $ \mathcal{R}_{2}$ into 
\begin{equation*}
 \mathcal{R}_{2} = - \int_\Omega  \rho \partial_t  v_{F} \cdot (u^{E} -u) dx
 +  \int_\Omega  \rho \partial_t  v_{F} \cdot v_{F} dx
  - \int_\Omega  \rho \Big(  (u \cdot \nabla_x ) v_{F} \Big) \cdot U dx 
  + \int_\Omega  \rho  (u \otimes u) : \nabla_x  v_{F} dx =: \mathcal{R}_{2,a} + \mathcal{R}_{2,b} + \mathcal{R}_{2,c} + \mathcal{R}_{2,d} .
\end{equation*}
Using the Cauchy-Schwarz inequality, we get 
\begin{equation*}
|    \mathcal{R}_{2,a} | \leq  \Big( \int_\Omega  \rho |  \partial_t  v_{F} |^{2}  \,   dx    \Big)^{\frac{1}{2}} \,   \Big(\int_\Omega  \rho   | u - u^{E}  |^{2}  \,   dx    \Big)^{\frac{1}{2}}  .
\end{equation*}
Then using Holder's inequality, \eqref{Fake2}, \eqref{ap} and Young's inequality, we get that $\mathcal{R}_{2,a}$ can be bounded as $\mathcal{R}_{1}$ in
\eqref{com}.
Using again Holder's inequality, \eqref{Fake1}, \eqref{Fake2} and \eqref{ap} we obtain that 
$\mathcal{R}_{2,b}  = o(1)$ in  $L^{1} (0,T)$.
Next, using \eqref{forme}, we decompose $ \mathcal{R}_{2,c}$ as 
\begin{equation*}
 \mathcal{R}_{2,c} = - \int_\Omega   \sqrt{\rho} z U \cdot \Big( \sqrt{\rho}  u  \cdot \nabla_x ) u^E \Big)    \,  dx 
 - \int_\Omega  \frac{  \rho u \cdot n}{d_\Omega }  \tilde{z} (U \cdot u^E )     . 
\end{equation*}
We bound the first term of the right hand side above thanks to  Holder's inequality,  \eqref{ap} and the uniform boundedness of $ \nabla_x u^E$ and of $U$ (see \eqref{Fake0}) by $C \|  z \|_{L^{\frac{2\gamma}{\gamma -1}}  (\Omega)}$ which is $o(1)$ in $L^{1} (0,T) $.
In order to bound  the second term we 
use again that $U$ is bounded in $C ( [0,T ]  \times \R^{3} )$, as well as the condition on the support of $ \tilde{z}$, the Cauchy-Schwarz inequality, 
\eqref{KatoCondition} and that  $\| \tilde{z}   \|_{L^{2} (\Omega)} = O(\eps^{\frac{1}{2}} )$.
Thus we obtain  $ \mathcal{R}_{2,c} =  o(1) $ in $ L^{1} (0,T) $.
Regarding the term $ \mathcal{R}_{2,d} $, we use that
\begin{equation*}
 \mathcal{R}_{2,d} = \int_\Omega  (  \frac{\sqrt{\rho}}{d_\Omega } u    \otimes  \frac{\sqrt{\rho}}{d_\Omega } u  ) : d_\Omega ^2 \nabla_x  v_{F}  \,   dx  ,
\end{equation*}
the condition on the support of $v_{F}$, the condition \eqref{KatoCondition} and the estimate \eqref{FakeImp2} to obtain    $ \mathcal{R}_{2,d} =  o(1) $ in $ L^{1} (0,T) $.

We decompose $ \mathcal{R}_{3}$ into 
\begin{equation*}
 \mathcal{R}_{3} = - \int_\Omega \eps \bS (\nabla_x  u) : \nabla_x  v_{F}  \,  dx 
 + \int_\Omega \eps \bS (\nabla_x  u) : \nabla_x  u^{E}  \,    dx 
\end{equation*}
so that, by the Cauchy-Schwarz inequality,  \eqref{Fake4}, the condition on the support of $v_{F}$, \eqref{ap} and \eqref{KatoCondition}  we obtain 
\begin{equation*}
   \mathcal{R}_{3} =  o(1) \text{ in } L^{1} (0,T) . 
\end{equation*}
Now, we observe that 
\begin{equation*}
  \mathcal{R}_{4} =
- (\gamma -1) \int_\Omega (  \div u^{E})   H( \rho \vert  \rho^{E})  dx .
\end{equation*}
Therefore, 
\begin{equation*}
|   \mathcal{R}_{4} |  \leq  C  \int_\Omega H( \rho \vert  \rho^{E}) dx  .
\end{equation*}
We decompose $ \mathcal{R}_{5}$ into 
\begin{equation*}
  \mathcal{R}_{5} =  
- \int_{\Omega \cap \{  | \rho -\rho^{E} | < 1  \}}  (\rho^{E}- \rho) \Big(  v_{F} \cdot \nabla_x   H' (\rho^{E})   \Big) dx 
- \int_{\Omega \cap \{  | \rho -\rho^{E} | \geq 1  \}}  (\rho^{E}- \rho) \Big(  v_{F} \cdot \nabla_x   H' (\rho^{E})   \Big) dx  .
\end{equation*}
Using  Holder's inequality, \eqref{Fake1} and \eqref{Coince}, we obtain that there exists a constant $c>0$ such that 
\begin{equation*}
c |   \mathcal{R}_{5} |    \leq \eps +  \int_\Omega H( \rho \vert  \rho^{E}) dx .
\end{equation*}
We decompose $ \mathcal{R}_{6}$ into 
\begin{eqnarray*}
  \mathcal{R}_{6} =  \int_\Omega (  \div v_{F})  \Big( p(\rho) - p(\rho^{E})  - p' (\rho^{E})  (\rho -\rho^{E})  \Big) dx 
  +  \int_{\Omega \cap \{  | \rho -\rho^{E} | < 1  \}}  (\div v_{F})  p' (\rho^{E}) (\rho -\rho^{E})   dx  
 \\ \quad  + \int_{\Omega \cap \{  | \rho -\rho^{E} |  \geq 1  \}} ( \div v_{F} ) p' (\rho^{E})  (\rho -\rho^{E})   dx .
\end{eqnarray*}
Thus, using Holder's inequality and \eqref{Coince}, we obtain that there exists a constant $c>0$ such that 
\begin{equation*}
c |   \mathcal{R}_{6} |    \leq 
\|   \div v_{F}  \|_{L^{\infty} (\Omega )}  \int_\Omega H( \rho \vert  \rho^{E}) dx
+ \|  \div v_{F}  \|_{L^{2} (\Omega )}   (\int_\Omega H( \rho \vert  \rho^{E}) dx)^{\frac{1}{2}}
+ \|  \div v_{F}  \|_{L^{\frac{\gamma}{\gamma - 1}} (\Omega )}  (\int_\Omega H( \rho \vert  \rho^{E}) dx)^{\frac{1}{\gamma}} .
\end{equation*}
Using now \eqref{Fake-2}, \eqref{Fake-1} and  Young's inequality, we get that  there exists a constant $c>0$ such that 
\begin{equation*}
c |   \mathcal{R}_{6} |    \leq \eps +  \int_\Omega H( \rho \vert  \rho^{E}) dx  .
\end{equation*}
Summing the previous estimates provides an estimate of the form \eqref{crucial} and therefore concludes the proof of Theorem \ref{ThKato}.

\section{Proof of Theorem \ref{ThNavier}}
\label{ProofTheoremNavier}

In this section we prove Theorem \ref{ThNavier}.
We proceed as in the previous section. 
Actually it is even a little bit more simple since we will not need any fake layer here.
The reason is that the relative energy inequality is valid, in this case, for any smooth test function whose normal velocity vanishes on the boundary. 
We will therefore apply this inequality directly to the Euler solution. 

\subsection{Relative energy inequality}

In the case of the Navier boundary conditions the   finite energy weak solutions of the compressible Navier-Stokes system enjoy the following 
relative energy inequality which is a slight rephrasing of \cite[Eq. (3.11)-(3.12)] {feireisl-jmfm}.
\begin{Theorem}[\cite{feireisl-jmfm}] 
\label{EntropyNavier}
Let $T>0$ and  $(\rho , u)$ be a  finite energy weak solution of the compressible Navier-Stokes system  associated to an initial data $(\rho_{0} , u_{0} )$  such that $\rho_{0}  \geq 0$,  $\rho_{0} \in L^\gamma (\Omega) $, $\rho_{0} u_0^2 \in L^1 (\Omega) $ in the sense of Definition \ref{WsolNavier}.
Then, for any smooth test functions $(r,U):  [0,T] \times \overline{\Omega} \rightarrow (0,+\infty) \times \R^{3}$ satisfying 
$U \cdot n  \vert_{\partial \Omega}= 0 $,
 we have the following relative energy inequality:
\begin{equation}
\label{relativeNavier}
\E ( [ \rho , u] \vert [r , U ] )   (\tau )  
+ \eps \int_0^\tau \int_\Omega  \bS (\nabla_x  u) : \nabla_x  u \, dx
+ \beta \int_0^\tau \int_{\partial \Omega} | u|^2 \, d\sigma dt  
 \leq  \E_{0}  + \int_0^\tau \tilde{\mathcal{R}}  ( \rho , u , r , U ) \, dt ,
  \end{equation}
for almost every $\tau$, where $\E_0$ is given by \eqref{IE}
and
\begin{eqnarray*}
\tilde{\mathcal{R}}  ( \rho , u , r , U ) :=
\int_\Omega  \rho \Big(  \partial_t U  +   (u \cdot \nabla_x ) U \Big) \cdot (U-u) \, dx 
+ \eps \int_\Omega  \bS (\nabla_x  u) : \nabla_x  U \, dx 
+  \beta  \int_{\partial \Omega} u \cdot U \, d\sigma   
\\ +  \int_\Omega  \Big( (r- \rho) \partial_t H' (r) +  \nabla_x   H' (r)\cdot (rU-\rho u)    \Big)\,  dx
- \int_\Omega   (\div U)  \Big( p(\rho) - p(r)   \Big) \, dx .
 \end{eqnarray*}
\end{Theorem}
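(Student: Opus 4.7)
The plan is to derive \eqref{relativeNavier} by combining the energy inequality from Definition \ref{WsolNavier} with the weak formulations of the continuity and momentum equations, tested against scalar and vector functions built from the smooth pair $(r,U)$. The key observation that makes the Navier case tractable is that any smooth $U$ with $U\cdot n = 0$ on $\partial\Omega$ is itself an admissible test function in the weak momentum identity, so no ``fake layer'' correction is needed.

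First, I would expand the relative energy into additive pieces,
\begin{equation*}
\mathcal{E}([\rho,u]|[r,U]) = \int_\Omega\tfrac{1}{2}\rho|u|^2\,dx - \int_\Omega \rho u\cdot U\,dx + \int_\Omega\tfrac{1}{2}\rho|U|^2\,dx + \int_\Omega H(\rho)\,dx - \int_\Omega \rho H'(r)\,dx + \int_\Omega p(r)\,dx,
\end{equation*}
using the pointwise identity $rH'(r) - H(r) = p(r)$ that follows directly from \eqref{sake} and \eqref{nrj}. Next, I would obtain an (in)equality for each of these pieces between $t=0$ and $t=\tau$. The piece $\int\tfrac{1}{2}\rho|u|^2 + H(\rho)$ is controlled by the energy inequality of Definition \ref{WsolNavier}, which brings $\varepsilon\int_0^\tau\!\int\bS(\nabla_x u)\!:\!\nabla_x u$ and $\beta\int_0^\tau\!\int_{\partial\Omega}|u|^2$ to the left side of \eqref{relativeNavier}. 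Plugging $\phi = U$ into the weak momentum identity (admissible because $U\cdot n = 0$) yields the increment of $\int\rho u\cdot U$. Plugging the scalar $\phi = \tfrac{1}{2}|U|^2$ and then $\phi = H'(r)$ into the weak continuity equation yields the increments of $\int\tfrac{1}{2}\rho|U|^2$ and $\int\rho H'(r)$ respectively, where one uses $\partial_t(\tfrac{1}{2}|U|^2) = U\cdot\partial_t U$ and $u\cdot\nabla_x(\tfrac{1}{2}|U|^2) = U\cdot(u\cdot\nabla_x)U$. Finally, since $r$ is smooth and bounded away from zero, the increment of $\int p(r)\,dx$ is obtained by direct time differentiation under the integral.

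Assembling these via the signed sum (energy) $-$ (momentum test) $+$ ($|U|^2/2$-continuity test) $-$ ($H'(r)$-continuity test) $+$ (pure-$r$), the dissipation and boundary terms land on the left of \eqref{relativeNavier}. On the right, the terms involving $\partial_t U$ and $u\cdot\nabla_x U$ regroup into $\int\rho(\partial_t U + (u\cdot\nabla_x)U)\cdot(U-u)\,dx$, while the viscous and boundary contributions from the momentum test become $\varepsilon\int\bS(\nabla_x u)\!:\!\nabla_x U\,dx + \beta\int_{\partial\Omega} u\cdot U\,d\sigma$. For the pressure pieces, I would use the identity $p'(r) = rH''(r)$ (so that $\nabla_x p(r) = r\nabla_x H'(r)$ and $\partial_t p(r) = r\partial_t H'(r)$), together with an integration by parts $\int r U\cdot\nabla_x H'(r)\,dx = -\int(\div U)p(r)\,dx$ whose boundary contribution vanishes thanks to $U\cdot n = 0$; this repackages the remaining contributions exactly as $\int((r-\rho)\partial_t H'(r) + \nabla_x H'(r)\cdot(rU-\rho u))\,dx - \int(\div U)(p(\rho)-p(r))\,dx$, matching $\tilde{\mathcal{R}}$.

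The main obstacle is bookkeeping rather than conceptual: one must carefully collect the pressure-type contributions coming from the momentum test, the $H'(r)$-continuity test, and the pure-$r$ piece, so that they regroup into the compact form appearing in $\tilde{\mathcal{R}}$; this is precisely where the boundary condition $U\cdot n = 0$ enters, via the integration by parts that kills the boundary term of $\int r U\cdot\nabla_x H'(r)\,dx$. A minor technical point is that $U$, $|U|^2/2$, and $H'(r)$ need not have compact support in $[0,T]\times\overline{\Omega}$, so the weak formulations should be applied with a time cut-off $\chi_\delta(t)$ localizing to $[0,\tau]$ and then passed to the limit $\delta\to 0$ at Lebesgue points $\tau$, using the $C_w$-in-time regularity of $\rho$ and $\rho u$ guaranteed by Definition \ref{WsolNavier}.
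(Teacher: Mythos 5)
The paper does not prove this statement itself: Theorem \ref{EntropyNavier} is imported verbatim (as ``a slight rephrasing of Eq.\ (3.11)--(3.12)'') from \cite{feireisl-jmfm}, so there is no in-paper proof to compare against. Your proposal correctly reconstructs the standard derivation used in that reference: the expansion of $\E([\rho,u]|[r,U])$ via $rH'(r)-H(r)=p(r)$, the signed combination of the energy inequality with the momentum equation tested on $U$ and the continuity equation tested on $\tfrac12|U|^2$ and $H'(r)$, the identity $p'(r)=rH''(r)$, and the integration by parts $\int_\Omega rU\cdot\nabla_x H'(r)\,dx=-\int_\Omega(\div U)p(r)\,dx$ (using $U\cdot n=0$) all check out, and the time-localization caveat you flag is exactly the technical point that needs handling since the weak formulations in Definition \ref{WsolNavier} are stated at the endpoint $T$. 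This is essentially the same argument as in the cited source, not a genuinely different route.
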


\subsection{Proof of Theorem \ref{ThNavier}}

We apply \eqref{relativeNavier} to 
$(r , U) =  (\rho^{E}, u^{E}  )$ and we use \eqref{eulerrecast} and then \eqref{Heq} to get 
\begin{eqnarray*}
\tilde{\mathcal{R}}  ( \rho , u , \rho^{E} , u^{E} ) &=&
 \int_\Omega  \rho  \Big(   ((u - u^{E} )  \cdot \nabla_x ) u^{E} \Big)  \cdot (u^{E}-u)  dx 
 +  \beta  \int_{\partial \Omega} u \cdot u^{E}  d\sigma   
 + \eps \int_\Omega  \bS (\nabla_x  u) : \nabla_x  u^{E} dx 
\\ \nonumber && 
+  \int_\Omega   (\rho^{E}- \rho) \Big( \partial_t H' (\rho^{E}) +  u^{E} \cdot \nabla_x   H' (\rho^{E})   \Big) dx 
 - \int_\Omega    (\div u^{E})  \Big( p(\rho) - p(\rho^{E})   \Big) dx ,
\\ &=&
 - \int_\Omega  \rho  ((u - u^{E} )  \otimes (u - u^{E} ) ) : \nabla_x  u^{E} dx 
 +  \beta  \int_{\partial \Omega} u \cdot  u^{E}  d\sigma   
 + \eps \int_\Omega  \bS (\nabla_x  u) : \nabla_x  u^{E} dx 
\\ \nonumber && \quad
- \int_\Omega    (\div u^{E})  \Big( p(\rho) - p(\rho^{E})  - p'(\rho^{E}) (\rho -\rho^{E})  \Big) dx 
 =: \tilde{\mathcal{R}}_{1} + \ldots  \tilde{\mathcal{R}}_{4} .
 \end{eqnarray*}
We have 
\begin{equation}
\label{R1}
|   \tilde{\mathcal{R}}_{1} |  \leq  C  \int_\Omega  \rho   | u - u^{E}  |^{2}  .
\end{equation}
On the other hand, we have:
\begin{equation}
\label{R2}
|   \tilde{\mathcal{R}}_{2} |  \leq  \frac{1}{2} \beta  \int_{\partial \Omega} | u|^2 \, d\sigma 
+  \frac{1}{2} \beta  \int_{\partial \Omega} | u^{E} |^2 \, d\sigma   .
\end{equation}
Therefore the term produced in the right hand side of \eqref{relativeNavier}
by the first term of the right hand side above can be absorbed by the third term of the left hand side of \eqref{relativeNavier}, whereas the other term goes to zero as $\beta$ goes to $0$ when $\eps $ goes to $0$.

Regarding the term $ \tilde{\mathcal{R}}_{3}$, we use the Cauchy-Schwarz inequality and  the Young inequality in order to get 
\begin{equation}
\label{R3}
|   \tilde{\mathcal{R}}_{3} |  \leq  \frac{C_0 \eps}{2}  \int_\Omega | \nabla_x  u |^2 \, dx 
+  C \eps    ,
\end{equation}
where $ C_0$ is the constant appearing in \eqref{coercifnoslip}.
Therefore the term produced in the right hand side of \eqref{relativeNavier}
by the first term of the right hand side above can be absorbed by the second term of the left hand side of \eqref{relativeNavier}, whereas the other term goes to zero  when $\eps $ goes to $0$.

Finally, we have 
\begin{equation}
\label{R4}
|   \tilde{\mathcal{R}}_{4} |  \leq  C \E ( [ \rho , u] \vert [\rho^{E}, u^{E} ] )    .
\end{equation}
Therefore it is sufficient to sum the  estimates \eqref{R1}-\eqref{R4} and to use the Gronwall lemma to conclude the proof of Theorem \ref{ThNavier}.

\par
\ \par
\noindent
{\bf Acknowledgements.} The  author is partially supported by the Agence Nationale de la Recherche, Project CISIFS,  grant ANR-09-BLAN-0213-02.

\end{document}